\newtheorem{thm}{Theorem}[section]
\newtheorem{cor}[thm]{Corollary}
\newtheorem{prop}[thm]{Proposition}
\theoremstyle{remark}
\newcounter{remarkscounter}
\numberwithin{equation}{section}
\newcommand{\A}{\mathbb{A}}
\newcommand{\AI}{\mathrm{AI}}
\newcommand{\I}{\mathrm{Ind}}
\newcommand{\GL}{\mathrm{GL}}
\newcommand{\ZZ}{\mathbb{Z}}
\newcommand{\Sym}{\mathrm{Sym}}
\newcommand{\Gal}{\mathrm{Gal}}
\newcommand{\QQ}{\mathbb{Q}}
\newcommand{\lto}{\longrightarrow}
\newcommand{\CC}{\mathbb{C}}
\newcommand{\quash}[1]{}
\theoremstyle{definition}
\numberwithin{equation}{subsection}
\begin{document}
\title{Poles of triple product $L$-functions involving monomial representations}
\author{Heekyoung Hahn}
\address{Department of Mathematics\\
Duke University\\
Durham, NC 27708}
\email{hahn@math.duke.edu}

\subjclass[2010]{Primary 11F70;  Secondary 11F66, 11E57}


\begin{abstract}
In this paper, we study the order of the pole of the triple tensor product $L$-functions $L(s,\pi_1\times\pi_2\times\pi_3,\otimes^3)$ for cuspidal automorphic representations $\pi_i$ of $\GL_{n_i}(\A_F)$ in the setting where  one of the $\pi_i$ is a monomial representation. In the view of Brauer theory, this is a natural setting to consider. The results provided in this paper give crucial examples that can be used as a point of reference for Langlands' beyond endoscopy proposal.
\end{abstract}

\maketitle

\section{Introduction}

Let $G$ be a reductive group over a number field $F$ and let $\A_F$ be the adeles of $F$. For a given representation
\begin{equation}\label{Lmap}
 {}^LG\lto \GL_n(\CC),
\end{equation} 
the  Langlands functoriality conjectures \cite{Langlands_conj}  predict there should be a corresponding transfer of automorphic representations of $G(\A_F)$ to automorphic representations of $\GL_n(\A_F)$.
The image of functorial transfers of automorphic representations can conjectually be characterized in terms of $L$-functions. This is the crux of the Langlands' beyond endoscopy proposal \cite{Langlands_beyond}. Roughly speaking, it states that if $\pi$ is a functorial transfer from $G$, then $L(s,\pi,r \otimes \eta)$ has a pole at $s=1$ for some character $\eta : F^{\times} \backslash \A_F^{\times} \to \CC^\times$ whenever a representation $r$ detects the Zariski closure of the  image of the $L$-map \eqref{Lmap} in the sense of  \cite{Hahn-PAMS} and \cite {Hahn-RNUT} (see also \cite{HHLS} for related work). 

A great deal of work on the properties of the $L$-functions has been done when  $r=\Sym^2$ (see  \cite{Arthur}, \cite{CKPSS} and \cite{GRS} for example). There are  similar results for $r=\Lambda^2$ (see \cite{GJR1} and \cite{GJR2} for instance). Therefore the tensor product $L$-function associated to $r=\otimes^2=\Sym^2 \oplus \Lambda^2$ is  relatively well-understood. In fact, via Rankin-Selberg theory, one knows the analytic properties of the tensor product Langlands $L$-functions:
$$
L(s,\pi_1 \times \pi_2,RS)=L(s, \pi_1\times\pi_2),
$$
where $\pi_1$ (resp. $\pi_2$) is a cuspidal automorphic representation of $\GL_m(\A_F)$ (resp.  $\GL_n(\A_F)$), the $L$-function on the right is the Rankin-Selberg $L$-function and the $L$-function on the left is the Langlands $L$-function associated to
$$
RS: {}^L(\GL_m\times\GL_n)\hookrightarrow {}^L\GL_{mn},
$$
the representation induced by the usual tensor product (see also \cite{GK} for related work on  Rankin-Selberg transfers).  Apart from this case, however, at this point very little is known about the general picture, even in the case of triple tensor products to be defined below. Thus the community is in dire need of concrete examples. 

To get a glimpse of the triple tensor product  $L$-functions 
$$L(s,\pi_1\times\pi_2\times\pi_3, \otimes^3):=L(s,\pi_1\times\pi_2\times\pi_3)$$associated to the tensor product
$$
\otimes^3 : {}^L(\GL_{n_1}\times \GL_{n_2} \times \GL_{n_3}) \longrightarrow {}^L\GL_{n_1n_2n_3},
$$we impose a restriction on $\pi_3$ that it is induced from a Hecke character.

An important remark should be made here: the restriction that $\pi_3$ is a monomial representation is motivated by Brauer theory.  Brauer theory states that any representation of a finite group is a $\ZZ$-linear combination of monomial representations, that is, representations induced from characters of subgroups.  In other words, this theory indicates that monomial representations are ``enough'' to study representations with finite image. 
 
To state our results, for $i=1, 2$, we let $\pi_i$ be cuspidal automorphic representations of $A_{\GL_{n_i}}\backslash\GL_{n_i} (\A_F)$, where $A_{\GL_{n_i}}$ is the neutral component of the real points of the greatest $\QQ$-split torus in the center of $\mathrm{Res}_{F/\QQ}{\GL_{n_i}}$. Let $\chi$ be a Hecke character $\chi: A_{\mathbb{G}_m} K^{\times} \backslash \A_{K}^{\times} \to \CC^\times$, where $K/F$ is any cyclic extensions of number fields of prime degree. Denote by $\pi_{iK}$ the base changes to $K$. It is well-known from \cite[Chapter 3]{ArthurClozel} that
$\pi_{iK}$ is an automorphic representation of  $A_{\GL_{n_i}}\backslash\GL_{n_i} (\A_K)$. We study the order of the pole at $s=1$ of the triple product $L$-function
\begin{equation}\label{Lfunction}
L(s, \pi_1\times\pi_2\times \AI(\chi)),
\end{equation}
where $\AI (\chi)$ is the automorphic induction of $\chi$.

In our first result, we discuss a setting when the triple tensor product $L$-function has at most a simple pole.

\begin{thm}\label{thm:intro-irr}
Let  $K/F$ be a cyclic extension of number fields of prime degree degree, let $\chi: A_{\mathbb{G}_m} K^{\times} \backslash \A_{K}^{\times} \to \CC^\times$, and for $i=1, 2$, let $\pi_i$ be cuspidal automorphic representations of $A_{\GL_{n_i}}\backslash\GL_{n_i} (\A_F)$. Suppose that one of the $\pi_{iK}$ is a cuspidal automorphic representation of $A_{\GL_{n_i}}\backslash\GL_{n_i} (\A_K)$. Then one has that
$$
\mathrm{ord}_{s=1}L(s, \pi_1\times \pi_2\times \AI(\chi))=\begin{cases} 1 \text{ if } \pi_{1K}\cong \pi_{2K}^\vee \otimes \chi^{-1}\\
0 \text{ otherwise. }
\end{cases}
$$
\end{thm}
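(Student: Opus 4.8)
The plan is to collapse the triple product into a Rankin--Selberg $L$-function over $K$ and then to quote the analytic theory of the latter. Write $\sigma$ for a generator of $\Gal(K/F)$ and $\ell=[K:F]$. The representation $\AI(\chi)$ has Langlands parameter $\mathrm{Ind}_{W_K}^{W_F}\chi$, so the projection (tensor--induction) formula
$$
\phi_{\pi_1}\otimes\phi_{\pi_2}\otimes\mathrm{Ind}_{W_K}^{W_F}\chi\;\cong\;\mathrm{Ind}_{W_K}^{W_F}\bigl(\phi_{\pi_1}|_{W_K}\otimes\phi_{\pi_2}|_{W_K}\otimes\chi\bigr)
$$
holds at every place. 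First I would promote this to an identity of $L$-functions by comparing local factors: combining the inductivity of local $L$-factors (Artin formalism, available here through local Langlands and the defining properties of automorphic induction and base change) with the fact that restriction of $\phi_{\pi_i}$ to $W_K$ computes $\pi_{iK}$, I obtain
$$
L(s,\pi_1\times\pi_2\times\AI(\chi))=L\bigl(s,(\pi_{1K}\otimes\chi)\times\pi_{2K}\bigr).
$$
Here $\chi$ is absorbed as a harmless twist, and the right-hand side is an honest Rankin--Selberg $L$-function over $K$, so it inherits meromorphic continuation and a known pole structure. The split, inert, and ramified places must be checked separately, but the partial and then completed identities follow in the standard way.

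With the reduction in hand, the analytic input is the theorem of Jacquet, Piatetski-Shapiro and Shalika, refined by Moeglin--Waldspurger: for a cuspidal $\Sigma$ and an isobaric automorphic representation $\Pi$ over $K$, the $L$-function $L(s,\Sigma\times\Pi)$ is holomorphic and nonvanishing at $s=1$ apart from a simple pole contributed by each cuspidal constituent of $\Pi$ isomorphic to $\Sigma^\vee$. Using the hypothesis, I may take $\Sigma=\pi_{1K}\otimes\chi$ (which is cuspidal precisely because $\pi_{1K}$ is) and $\Pi=\pi_{2K}$. Consequently the order of the pole at $s=1$ equals the multiplicity of $\pi_{1K}^\vee\otimes\chi^{-1}=(\pi_{1K}\otimes\chi)^\vee$ as a cuspidal constituent of $\pi_{2K}$; in particular it is at most $1$, which is the ``at most a simple pole'' assertion.

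It remains to match this constituent condition with the stated isomorphism $\pi_{1K}\cong\pi_{2K}^\vee\otimes\chi^{-1}$. When $\pi_{2K}$ is cuspidal this is immediate: $\pi_{2K}$ is its own unique constituent, so the pole occurs if and only if $\pi_{1K}^\vee\otimes\chi^{-1}\cong\pi_{2K}$, equivalently $\pi_{1K}\cong\pi_{2K}^\vee\otimes\chi^{-1}$. The step I expect to be the main obstacle is the remaining case, where $\pi_{2K}$ is reducible: by Arthur--Clozel it is then the isobaric sum $\boxplus_{i=0}^{\ell-1}\tau^{\sigma^i}$ of the $\ell$ distinct $\Gal(K/F)$-conjugates of a cuspidal $\tau$ on $\GL_{n_2/\ell}(\A_K)$. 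One must then analyze when $\pi_{1K}^\vee\otimes\chi^{-1}$ can coincide with some $\tau^{\sigma^i}$, exploiting the $\sigma$-invariance $\pi_{1K}^\sigma\cong\pi_{1K}$ of a base change, in order to confirm that the pole is detected exactly by the isomorphism-class condition in the statement. This bookkeeping---reconciling the constituent criterion produced by the Rankin--Selberg step with the clean dichotomy asserted---is the delicate heart of the proof; everything else is a formal consequence of the reduction and of Rankin--Selberg theory.
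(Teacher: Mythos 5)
Your reduction is exactly the paper's: the projection formula $\mathrm{Ind}(V|_H\otimes W)\cong V\otimes\mathrm{Ind}(W)$, together with the invariance of $L$-functions under induction, collapses the triple product to the Rankin--Selberg $L$-function $L(s,\pi_{1K}\times(\pi_{2K}\otimes\chi))$ over $K$, and the analytic input is the same Rankin--Selberg pole criterion. Where you differ is that you state that criterion in its correct general form --- for $\Sigma$ cuspidal and $\Pi$ isobaric, the order of the pole at $s=1$ equals the multiplicity of $\Sigma^\vee$ among the cuspidal constituents of $\Pi$ --- whereas the paper simply asserts ``simple pole iff $\pi_{1K}\cong\pi_{2K}^\vee\otimes\chi^{-1}$, otherwise holomorphic'' and stops. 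Your version immediately yields the ``at most a simple pole'' half of the statement and the full dichotomy when both base changes are cuspidal.

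The case you defer --- $\pi_{1K}$ cuspidal but $\pi_{2K}=\boxplus_{j}\sigma^{j}(\theta_2)$ reducible --- is a genuine gap in your writeup, but your instinct that it is the delicate point is sound, and no amount of bookkeeping will reconcile it with the dichotomy as literally stated: the constituent criterion your argument produces and the displayed isomorphism $\pi_{1K}\cong\pi_{2K}^\vee\otimes\chi^{-1}$ genuinely diverge there. For instance, take $n_1=1$ and $\pi_1=\eta$ a Hecke character of $F$, choose $\chi$ with $\sigma(\chi)\neq\chi$, and set $\pi_2=\AI(\eta_K^{-1}\chi^{-1})$, which is cuspidal on $\GL_p(\A_F)$ by Arthur--Clozel since $\eta_K^{-1}\chi^{-1}$ is not Galois-invariant. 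Then $L(s,\pi_1\times\pi_2\times\AI(\chi))=\zeta_K(s)\prod_{j\neq 0}L(s,\chi\,\sigma^{j}(\chi)^{-1})$ has a simple pole at $s=1$, while $\pi_{1K}$ lives on $\GL_1(\A_K)$ and $\pi_{2K}^\vee\otimes\chi^{-1}$ on $\GL_p(\A_K)$, so the stated condition fails. Thus, unless an implicit hypothesis is intended, the clean dichotomy holds only when both $\pi_{iK}$ are cuspidal; in the mixed case the correct condition is the one your Rankin--Selberg step actually proves, namely that $(\pi_{1K}\otimes\chi)^\vee$ occurs as an isobaric constituent of $\pi_{2K}$. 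The paper's own proof passes over this case silently, so your proposal, though incomplete as written, is the more careful account --- the fix is to stop at the constituent criterion rather than try to force it into the stated isomorphism.
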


Next result gives a bound on the order of the pole:

\begin{thm}\label{thm:intro-ind}
Let  $K/F$ be a cyclic extension of number fields of prime degree $p$, let $\chi: A_{\mathbb{G}_m} K^{\times} \backslash \A_{K}^{\times} \to \CC^\times$, and for $i=1, 2$, let $\pi_i$ be cuspidal automorphic representations of $A_{\GL_{n_i}}\backslash\GL_{n_i} (\A_F)$. Suppose that  $\pi_1$ and $\pi_2$ both are induced from $K$. Then the order of the pole of $L(s, \pi_1\times\pi_2\times \AI(\chi))$ at $s=1$ is at most $p$.  
\end{thm}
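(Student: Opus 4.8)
The plan is to reduce the triple product $L$-function over $F$ to a product of Rankin--Selberg $L$-functions of cuspidal representations over $K$, and then to count how many of these factors can simultaneously contribute a pole at $s=1$. The only genuine input is the inductivity of the $L$-function with respect to the automorphically induced factor $\AI(\chi)$, followed by a bookkeeping of Galois orbits.

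First I would exploit that $\AI(\chi)$ is automorphically induced from $\chi$. At each place $v$, writing $\phi_{i,v}$ for the local Langlands parameter of $\pi_{i,v}$ and $\Ind_{W_{K_v}}^{W_{F_v}}\chi_v$ for the parameter of $\AI(\chi)_v$, the projection formula for induced representations gives
\[
\phi_{1,v}\otimes\phi_{2,v}\otimes\Ind_{W_{K_v}}^{W_{F_v}}\chi_v \;\cong\; \Ind_{W_{K_v}}^{W_{F_v}}\bigl(\phi_{1,v}|_{W_{K_v}}\otimes\phi_{2,v}|_{W_{K_v}}\otimes\chi_v\bigr).
\]
Since the local $L$-factor of an induced Weil--Deligne representation agrees with that of the inducing representation over $K_v$, and since $\phi_{i,v}|_{W_{K_v}}$ is the parameter of the local base change, taking the Euler product yields the global identity
\[
L(s,\pi_1\times\pi_2\times\AI(\chi)) = L(s,\pi_{1K}\times\pi_{2K}\times\chi),
\]
the right-hand side being a triple product over $K$ whose third factor is the Hecke character $\chi$.

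Because $\chi$ is one-dimensional, this triple product over $K$ is a Rankin--Selberg $L$-function, $L(s,\pi_{1K}\times\pi_{2K}\times\chi)=L(s,\pi_{1K}\times(\pi_{2K}\otimes\chi))$. Now I would invoke the hypothesis that both $\pi_i$ are induced from $K$: writing $\pi_i=\AI(\Pi_i)$ with $\Pi_i$ cuspidal on $\GL_{n_i/p}(\A_K)$ and $\sigma$ a generator of $\Gal(K/F)$, the base change decomposes as the isobaric sum $\pi_{iK}=\boxplus_{j=0}^{p-1}\Pi_i^{\sigma^j}$. Crucially, cuspidality of $\pi_i=\AI(\Pi_i)$ forces the $p$ conjugates $\Pi_i,\Pi_i^{\sigma},\dots,\Pi_i^{\sigma^{p-1}}$ to be pairwise non-isomorphic (here $p$ prime is essential, as the stabilizer of $\Pi_i$ in $\Gal(K/F)$ is then trivial or everything). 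Expanding the Rankin--Selberg $L$-function of isobaric representations as a product over constituents gives
\[
L(s,\pi_1\times\pi_2\times\AI(\chi)) = \prod_{j=0}^{p-1}\prod_{k=0}^{p-1} L\bigl(s,\Pi_1^{\sigma^j}\times(\Pi_2^{\sigma^k}\otimes\chi)\bigr),
\]
a product of $p^2$ Rankin--Selberg $L$-functions of cuspidal representations of $\GL_{n_1/p}(\A_K)\times\GL_{n_2/p}(\A_K)$. By Jacquet--Shalika (and M\oe glin--Waldspurger) each factor is holomorphic and nonzero at $s=1$ unless $\Pi_1^{\sigma^j}\cong(\Pi_2^{\sigma^k})^\vee\otimes\chi^{-1}$, in which case it has a simple pole; hence the order of the pole of the whole product equals the number of pairs $(j,k)$ satisfying this isomorphism. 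For each fixed $j$ the relation forces $\Pi_2^{\sigma^k}\cong(\Pi_1^{\sigma^j})^\vee\otimes\chi^{-1}$, and since the $\Pi_2^{\sigma^k}$ are pairwise distinct there is at most one such $k$. Thus at most $p$ pairs contribute, giving $\mathrm{ord}_{s=1}L(s,\pi_1\times\pi_2\times\AI(\chi))\le p$.

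The main obstacle I expect is the first step: justifying the inductivity identity $L(s,\pi_1\times\pi_2\times\AI(\chi))=L(s,\pi_{1K}\times\pi_{2K}\times\chi)$ at the ramified and archimedean places, where one must match the local factors defined through the local Langlands correspondence rather than through Satake parameters, and confirm that the completed definition of the triple product $L$-function is stable under induction in one variable. Once this is secured, the rest is a count of Galois orbits, and it is precisely the cuspidality of each $\pi_i$, which makes the orbit of $\Pi_i$ regular, that pins the bound down to $p$.
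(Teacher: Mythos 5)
Your proposal is correct and follows essentially the same route as the paper: inductivity in the $\AI(\chi)$ factor to reduce to $L(s,\pi_{1K}\times(\pi_{2K}\otimes\chi))$ over $K$, the isobaric decomposition $\pi_{iK}=\boxplus_j \sigma^j(\theta_i)$ from Arthur--Clozel, factorization into $p^2$ cuspidal Rankin--Selberg factors, and the observation that cuspidality of $\pi_i$ (equivalently, non-Galois-invariance of $\theta_i$, with $p$ prime making the stabilizer trivial) allows at most one polar factor per fixed $j$. Your count (at most one $k$ for each $j$) is a slightly streamlined version of the paper's row-and-column argument on the $p\times p$ matrix of pairs, but it is the same idea.
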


\noindent As consequences of Theorem \ref{thm:intro-irr} and Theorem \ref{thm:intro-ind},  one can deduce when these $L$-functions do not have a pole at $s=1$ and what the exact order of its pole is. This is in Proposition \ref{prop:no-pole} and Corollary \ref{cor:exact-order}, respectively. 

We close by the introduction with a few remarks. The interplay between finite group theory and Artin $L$-functions enriched both representation theory and number theory and led to important advances, including Artin-Brauer theory.  Similarly, Langlands functoriality motivates beautiful and concrete problems in the representation theory of algebraic groups and algebraic combinatorics. Many of these problems have not received the attention they deserve, but represent fertile ground for future work.

\section{Proof of the results}\label{sec:main}

We begin this section by recalling two facts: one is that the $L$-functions are invariant under induction and the other is that $\AI(\chi)$ is automorphic \cite[Chapter 3]{ArthurClozel} for any Hecke character. Here $K/F$ is a cyclic extension of number fields of prime degree.
 
 If one of $\pi_{1K}$ and $\pi_{2K}$ is a cuspidal automorphic representation of $\GL_{n_i}(\A_K)$, then the corresponding $L$-function has at most simple pole:

\begin{thm}\label{thm:irr}
Let  $K/F$ be a cyclic extension of number fields of prime degree degree, let $\chi: A_{\mathbb{G}_m} K^{\times} \backslash \A_{K}^{\times} \to \CC^\times$, and for $i=1, 2$, let $\pi_i$ be cuspidal automorphic representations of $A_{\GL_{n_i}}\backslash\GL_{n_i} (\A_F)$. Suppose that one of the $\pi_{iK}$ is a cuspidal automorphic representation of $A_{\GL_{n_i}}\backslash\GL_{n_i} (\A_K)$. Then one has that
$$
\mathrm{ord}_{s=1}L(s, \pi_1\times \pi_2\times \AI(\chi))=\begin{cases} 1 \text{ if } \pi_{1K}\cong \pi_{2K}^\vee \otimes \chi^{-1}\\
0 \text{ otherwise. }
\end{cases}
$$
\end{thm}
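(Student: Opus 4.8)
The plan is to reduce the triple product $L$-function to a Rankin--Selberg $L$-function over $K$ and then invoke the known analytic theory of the latter. I would begin from the two facts recalled at the start of this section. Writing $\phi_{i,v}$ for the local Langlands parameter of $\pi_{i,v}$ and recalling that $\AI(\chi)$ has parameter $\mathrm{Ind}_{W_K}^{W_F}\chi$, the projection (push--pull) formula for induced representations gives, interpreted place by place,
$$
\phi_{1,v}\otimes\phi_{2,v}\otimes\mathrm{Ind}_{W_K}^{W_F}\chi \;\cong\; \mathrm{Ind}_{W_K}^{W_F}\bigl((\phi_{1,v}\otimes\phi_{2,v})|_{W_K}\otimes\chi\bigr).
$$
Since local $L$-factors are inductive and $\phi_{i,v}|_{W_K}$ is the parameter of the base change $\pi_{iK}$, taking the product over all places and using invariance of $L$-functions under induction yields
$$
L(s,\pi_1\times\pi_2\times\AI(\chi)) \;=\; L(s,\pi_{1K}\times(\pi_{2K}\otimes\chi)),
$$
the right-hand side being a Rankin--Selberg $L$-function over $K$.

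With this reduction in hand I would assume without loss of generality that $\pi_{1K}$ is the factor that is cuspidal, and then split the analysis according to whether $\pi_{2K}$ is cuspidal. If it is, then $\pi_{2K}\otimes\chi$ is cuspidal, and the theorem of Jacquet--Piatetski-Shapiro--Shalika (holomorphy and non-vanishing at the edge) together with the standard determination of poles of Rankin--Selberg $L$-functions shows that $L(s,\pi_{1K}\times(\pi_{2K}\otimes\chi))$ has a simple pole at $s=1$ exactly when $\pi_{1K}\cong(\pi_{2K}\otimes\chi)^\vee=\pi_{2K}^\vee\otimes\chi^{-1}$, and is holomorphic and non-zero at $s=1$ otherwise; this is precisely the asserted dichotomy. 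If $\pi_{2K}$ is not cuspidal, then since $K/F$ is cyclic of prime degree the base-change theory of Arthur--Clozel forces $\pi_{2K}\cong\boxplus_{j}\sigma^{\tau^{j}}$ to be a multiplicity-free isobaric sum of the distinct $\Gal(K/F)$-conjugates of a single cuspidal $\sigma$. Factoring the Rankin--Selberg $L$-function over these constituents and applying the cuspidal-times-cuspidal case to each factor, the order of the pole equals the number of constituents isomorphic to $\pi_{1K}^\vee\otimes\chi^{-1}$, which multiplicity-freeness bounds by $1$.

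The main obstacle is twofold. The reduction is essentially formal given the two recalled facts, but making it rigorous for the completed $L$-functions requires care at the ramified places: one must know that global automorphic induction and base change for $\GL_n$ are compatible with Rankin--Selberg local factors at \emph{every} place, not merely on the unramified part, which rests on the compatibility of the Arthur--Clozel constructions with the local Langlands correspondence. The genuinely delicate point, however, is the bookkeeping in the non-cuspidal branch: a priori a single conjugate $\sigma^{\tau^{j}}\otimes\chi$ could be isomorphic to $\pi_{1K}^\vee$ and produce a simple pole, so one must argue, using the cuspidality of $\pi_{1K}$ together with the $\Gal(K/F)$-equivariance of base change, that such a coincidence is incompatible with the hypotheses (equivalently, that it forces $\pi_{2K}$ itself to be cuspidal), thereby collapsing the constituent count to the clean condition $\pi_{1K}\cong\pi_{2K}^\vee\otimes\chi^{-1}$. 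Everything else is a direct appeal to the standard theory of Rankin--Selberg $L$-functions.
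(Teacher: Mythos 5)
Your reduction of $L(s,\pi_1\times\pi_2\times\AI(\chi))$ to the Rankin--Selberg $L$-function $L(s,\pi_{1K}\times(\pi_{2K}\otimes\chi))$ over $K$ via the projection formula and inductivity of $L$-factors is exactly the paper's argument, and your treatment of the branch where $\pi_{2K}$ (hence $\pi_{2K}\otimes\chi$) is also cuspidal coincides with the paper's appeal to the Jacquet--Piatetski-Shapiro--Shalika criterion. In fact the paper stops there: it applies the cuspidal-times-cuspidal criterion directly to $\pi_{1K}\times(\pi_{2K}\otimes\chi)$ without separating out the case where $\pi_{2K}$ fails to be cuspidal. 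Your instinct to split into cases is therefore sharper than the written proof.

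The genuine gap is in how you close the non-cuspidal branch. You correctly factor $L(s,\pi_{1K}\times(\pi_{2K}\otimes\chi))=\prod_{j}L(s,\pi_{1K}\times(\sigma^{\tau^j}\otimes\chi))$ and identify the order of the pole as the number of $j$ with $\sigma^{\tau^j}\otimes\chi\cong\pi_{1K}^\vee$, but the final assertion --- that such a coincidence is ``incompatible with the hypotheses'' or ``forces $\pi_{2K}$ to be cuspidal'' --- is not proved, and it cannot be: the coincidence only requires $n_1=n_2/p$ and is entirely consistent with $\pi_{1K}$ cuspidal and $\pi_{2K}$ non-cuspidal. Concretely, take $n_1=1$ with $\pi_1$ the trivial character, $\pi_2=\AI(\theta)$ for a Hecke character $\theta$ of $K$ not fixed by $\Gal(K/F)=\langle\tau\rangle$, and $\chi=\theta^{-1}$. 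Then $\pi_{1K}$ is cuspidal, so the hypotheses hold, yet
$$
L(s,\pi_{1K}\times(\pi_{2K}\otimes\chi))=\zeta_K(s)\prod_{j=1}^{p-1}L\bigl(s,\theta^{-1}\theta^{\tau^j}\bigr)
$$
has a simple pole at $s=1$ (equivalently, $L(s,\pi_1\times\pi_2\times\AI(\chi))=L(s,\AI(\theta)\times\AI(\theta)^\vee)$ has one), even though $\pi_{1K}\not\cong\pi_{2K}^\vee\otimes\chi^{-1}$ --- the two sides do not even live on the same group. So the constituent count in this branch does not collapse to the stated dichotomy; it can only be bounded by $1$, not forced to be governed by $\pi_{1K}\cong\pi_{2K}^\vee\otimes\chi^{-1}$. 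Your case analysis thus exposes a real issue that the paper's own proof passes over by applying the cuspidal Rankin--Selberg criterion to the possibly non-cuspidal representation $\pi_{2K}\otimes\chi$; to salvage the clean statement one must either assume both base changes are cuspidal, or restrict to $n_1=n_2$ together with an argument excluding constituent-level coincidences, neither of which is supplied in your proposal.
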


\begin{proof}
Recall the basic fact that if $H$ is a finite index subgroup of $G$ and $V$ a $G$-module and $W$ be a $H$-module, then 
\begin{equation}\label{ind-res}
\I (V|_H\otimes W) \cong V\otimes \I (W).
\end{equation}

Since $L$-functions are invariant under induction, using \eqref{ind-res}, one has that
\begin{align*}
L(s, \pi_1\times\pi_2\times \AI(\chi))&=L(s, \pi_{1K}\times \pi_{2K}\times \chi)\\
&=L(s, \pi_{1K}\times (\pi_{2K}\otimes \chi)).
\end{align*}

\noindent Assume that $\pi_{1K}$ is a cuspidal automorphic representation of $A_{\GL_{n_1}}\backslash\GL_{n_1} (\A_K)$. Then by the Rankin-Selberg theory (see \cite{Cogdell} for instance), one knows that $L(s, \pi_{1K}\times (\pi_{2K}\otimes \chi))$  has a simple pole when
$$
\pi_{1K} \cong \pi_{2K}^{\vee} \otimes  \chi^{-1}
$$
otherwise holomorphic. This completes the proof.
\end{proof}

Under what conditions on $\pi_1$ and $\pi_2$ do the triple tensor product $L$-functions have higher order poles? If so, how big it can be? Clearly, from Theorem \ref{thm:irr}, both $\pi_{1K}$ and $\pi_{2K}$ have to be noncuspidal. This implies that $\pi_1$ and $\pi_2$ are both induced from $K$ \cite[Chapter 3]{ArthurClozel}. In the following result, we give an upper bound on the order of the pole of triple tensor $L$-functions involving monomials:

\begin{thm}\label{thm:ind}
Let  $K/F$ be a cyclic extension of number fields of prime degree $p$, let $\chi: A_{\mathbb{G}_m} K^{\times} \backslash \A_{K}^{\times} \to \CC^\times$, and for $i=1, 2$, let $\pi_i$ be cuspidal automorphic representations of $A_{\GL_{n_i}}\backslash\GL_{n_i} (\A_F)$. Suppose that  $\pi_1$ and $\pi_2$ both are induced from $K$. Then the order of the pole of $L(s, \pi_1\times\pi_2\times \AI(\chi))$ at $s=1$ is at most $p$.  
\end{thm}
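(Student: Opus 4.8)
The plan is to reduce the problem, exactly as in the proof of Theorem~\ref{thm:irr}, to a Rankin--Selberg $L$-function over $K$, and then to exploit the explicit decomposition of the base changes $\pi_{iK}$. Applying the projection formula \eqref{ind-res} together with the invariance of $L$-functions under induction gives
$$L(s,\pi_1\times\pi_2\times\AI(\chi))=L(s,\pi_{1K}\times(\pi_{2K}\otimes\chi)),$$
so the entire computation is now phrased over $K$. The essential new feature, compared with Theorem~\ref{thm:irr}, is that \emph{both} $\pi_{1K}$ and $\pi_{2K}$ are now noncuspidal, so neither factor is simply a single cuspidal representation.

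Next I would invoke Arthur--Clozel theory to describe the two base changes. Since $\pi_i$ is cuspidal and induced from $K$, we may write $\pi_i=\AI(\sigma_i)$ for a cuspidal automorphic representation $\sigma_i$ of $\GL_{n_i/p}(\A_K)$ (with the analogous central normalization), and the base change is the isobaric sum $\pi_{iK}\cong\sigma_i\boxplus\sigma_i^{\tau}\boxplus\cdots\boxplus\sigma_i^{\tau^{p-1}}$, where $\tau$ generates $\Gal(K/F)$. The structural input I would state carefully is the cuspidality criterion: $\AI(\sigma_i)$ is cuspidal if and only if $\sigma_i\not\cong\sigma_i^{\tau}$, and because $\Gal(K/F)$ has prime order $p$ this forces the $p$ conjugates $\sigma_i,\sigma_i^{\tau},\dots,\sigma_i^{\tau^{p-1}}$ to be pairwise non-isomorphic.

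Using the additivity of the Rankin--Selberg $L$-function in each argument, I would then factor
$$L(s,\pi_{1K}\times(\pi_{2K}\otimes\chi))=\prod_{j=0}^{p-1}\prod_{k=0}^{p-1}L\bigl(s,\sigma_1^{\tau^j}\times(\sigma_2^{\tau^k}\otimes\chi)\bigr).$$
Each factor is a genuine Rankin--Selberg $L$-function attached to the cuspidal representations $\sigma_1^{\tau^j}$ and $\sigma_2^{\tau^k}\otimes\chi$ over $K$, so by Rankin--Selberg theory it has at most a simple pole at $s=1$, occurring precisely when $\sigma_1^{\tau^j}\cong(\sigma_2^{\tau^k})^{\vee}\otimes\chi^{-1}$. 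Hence $\mathrm{ord}_{s=1}$ of the product equals the number of pairs $(j,k)$ satisfying this isomorphism.

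The final step, and the heart of the bound, is a counting argument. For a fixed $j$, two indices $k,k'$ with $\sigma_1^{\tau^j}\cong(\sigma_2^{\tau^k})^{\vee}\otimes\chi^{-1}\cong(\sigma_2^{\tau^{k'}})^{\vee}\otimes\chi^{-1}$ would force $\sigma_2^{\tau^k}\cong\sigma_2^{\tau^{k'}}$, and the distinctness of the conjugates of $\sigma_2$ established above then gives $k=k'$. Thus at most one $k$ contributes for each of the $p$ values of $j$, so the number of contributing pairs---and therefore the order of the pole---is at most $p$. I expect the main obstacle to be not analytic but structural bookkeeping: one must ensure that the cuspidality criterion genuinely delivers the distinctness of all $p$ Galois conjugates (this is exactly where primality of $[K:F]$ is used) and that each $\sigma_2^{\tau^k}\otimes\chi$ remains cuspidal, so that Rankin--Selberg theory applies factor by factor.
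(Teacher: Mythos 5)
Your proposal is correct and follows essentially the same route as the paper: reduce via the projection formula to $L(s,\pi_{1K}\times(\pi_{2K}\otimes\chi))$, decompose both base changes as isobaric sums of the $p$ pairwise non-isomorphic Galois conjugates (using the Arthur--Clozel cuspidality criterion together with the primality of $[K:F]$), factor the Rankin--Selberg $L$-function, and count the contributing pairs $(j,k)$ by noting that at most one $k$ can work for each fixed $j$. The paper packages this last step as a statement about rows and columns of a $p\times p$ matrix of index pairs, but the content is identical to your counting argument.
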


\begin{proof}
Suppose that $\pi_1$ and $\pi_2$ are induced from $K$ and denote by $\boxplus$ the isoberic sum (see \cite[Chapter 10]{GH-book} for example). Let $\langle \sigma\rangle= \Gal(K/F)$. Then it is known from \cite[Chapter 3]{ArthurClozel} that 
$$\pi_{1K}=\boxplus_{\sigma}\sigma^j(\theta_1) \quad \text{and}\quad  \pi_{2K}=\boxplus_{\sigma}\sigma^j(\theta_2),$$
where $\theta_i$ are cuspidal automorphic representation of $A_{\GL_{n_i/p}}\backslash\GL_{n_i/p} (\A_K)$. Therefore one has that
\begin{align*}
L(s, \pi_{1K}\times (\pi_{2K}\otimes \chi))&= L(s,\, \boxplus_{\sigma}\sigma^j(\theta_1) \times (\boxplus_{\sigma}\sigma^j(\theta_2)\otimes \chi))\\
&=L(s, \, \boxplus_{\sigma}\sigma^j(\theta_1) \times \boxplus_{\sigma}(\sigma^j(\theta_2)\otimes \chi))\\
&=\prod_{j, k} L(s, \sigma^j(\theta_1) \times (\sigma^k(\theta_2)\otimes \chi))
\end{align*}
for $0\leq j, k \leq (p-1)$. Hence
\begin{align*}
\mathrm{ord}_{s=1}L(s, \pi_{1K}\times (\pi_{2K}\otimes \chi))=\sum_{j, k}\mathrm{ord}_{s=1}L(s, \sigma^j(\theta_1)\times (\sigma^k(\theta_2)\otimes \chi)).
\end{align*}
 Therefore, one has to count all the possible pairs $(j, k)$, $0\leq j, k\leq (p-1)$ satisfying
\begin{equation}\label{actual-count}
(\sigma^j(\theta_2))^\vee\otimes \chi^{-1}\cong \sigma^k(\theta_1).
\end{equation}
The key fact we use is  neither $\theta_1$ nor $\theta_2$ can be invariant under $\Gal(K/F)$. Otherwise, $\pi_1$ and $\pi_2$ would be noncuspidal, contradicting our assumptions (see \cite[Theorem 6.2, Chapter 3]{ArthurClozel}).

Consider the $p\times p$ matrix whose entries are all the possible pairs of $(j, k)$, namely
$$
A=\left[
\begin{array}{ccccc}
(0,0)&(0,1)&(0,2)&\cdots &(0, p-1)\\
(1,0)&(1,1)&(1,2) &\cdots &(1, p-1)\\
(2,0)&(2, 1)& (2,2) &\cdots & (2, p-1)\\
&&\ddots&&\\
(p-1, 0)&(p-1, 1)&(p-1, 2)&\cdots &(p-1, p-1)
\end{array}\right].
$$
We claim that no two pairs in a  column or in a  row of the matrix $A$ can satisfy \eqref{actual-count} simultaneously: Suppose that two pairs $(j, k)$ and $(j, m)$ in the $j$-th row of $A$ with $k >m$ satisfy
$$
(\sigma^j(\theta_2))^\vee\otimes \chi^{-1}\cong \sigma^k(\theta_1)\quad \text{and}\quad (\sigma^j(\theta_2))^\vee\otimes \chi^{-1}\cong \sigma^m(\theta_1).$$This would imply that $\sigma^{k-m}(\theta_1)\cong\theta_1$, which contradicts to the fact that $\theta_1$ can not be invariant under $\Gal (K/F)$. Similarly, no two pairs in the $k$-th column of $A$ satisfy \eqref{actual-count}  simultaneously. Therefore the number of pairs $(j, k)$ satisfying \eqref{actual-count} is at most $p$.
\end{proof}

The following result is to state when one trivially knows that the triple tensor $L$-functions do not have a pole at $s=1$:

\begin{prop}\label{prop:no-pole}
Let  $K/F$ be a cyclic extension of number fields of prime degree, let $\chi: A_{\mathbb{G}_m} K^{\times} \backslash \A_{K}^{\times} \to \CC^\times$, and for $i=1, 2$, let $\pi_i$ be cuspidal automorphic representations of $A_{\GL_{n_i}}\backslash\GL_{n_i} (\A_F)$. Suppose that  $\pi_1$ and $\pi_2$ both are induced from $K$ with $n_1 \neq n_2$. Then      $L(s, \pi_1\times\pi_2\times \AI(\chi))$ does not have a pole at $s=1$. 
\end{prop}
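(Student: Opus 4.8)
The plan is to reduce to the explicit count already carried out in the proof of Theorem~\ref{thm:ind} and then observe that a rank obstruction forces that count to be zero. First I would repeat the opening of that argument: since $\pi_1$ and $\pi_2$ are induced from $K$, writing $\langle\sigma\rangle=\Gal(K/F)$ one has $\pi_{iK}=\boxplus_{\sigma}\sigma^j(\theta_i)$ with $\theta_i$ cuspidal on $A_{\GL_{n_i/p}}\backslash\GL_{n_i/p}(\A_K)$, and the $L$-function factors as
\begin{equation*}
L(s,\pi_1\times\pi_2\times\AI(\chi))=\prod_{j,k}L(s,\sigma^j(\theta_1)\times(\sigma^k(\theta_2)\otimes\chi)),\qquad 0\le j,k\le p-1.
\end{equation*}
Thus $\mathrm{ord}_{s=1}L(s,\pi_1\times\pi_2\times\AI(\chi))$ equals the number of pairs $(j,k)$ for which $(\sigma^j(\theta_2))^\vee\otimes\chi^{-1}\cong\sigma^k(\theta_1)$.

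The key point I would then make is that when $n_1\neq n_2$ this condition can never be met. Each of the operations $\sigma^k(-)$, $(-)^\vee$, and $(-)\otimes\chi^{-1}$ preserves the underlying general linear group: a Galois conjugate, a contragredient, and a character twist of a cuspidal representation of $\GL_m(\A_K)$ is again a representation of $\GL_m(\A_K)$. Hence $\sigma^k(\theta_1)$ is a representation of $\GL_{n_1/p}(\A_K)$, while $(\sigma^j(\theta_2))^\vee\otimes\chi^{-1}$ is a representation of $\GL_{n_2/p}(\A_K)$. Since $n_1\neq n_2$ forces $n_1/p\neq n_2/p$, the two representations live on different groups and cannot be isomorphic, so no pair $(j,k)$ satisfies \eqref{actual-count} and the order of the pole is zero.

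Equivalently, and perhaps cleaner to state, I could argue factor by factor: each $L(s,\sigma^j(\theta_1)\times(\sigma^k(\theta_2)\otimes\chi))$ is a Rankin-Selberg convolution of a cuspidal representation of $\GL_{n_1/p}$ with a cuspidal representation of $\GL_{n_2/p}$, and when $n_1/p\neq n_2/p$ such a convolution is entire (no pole at $s=1$) by Rankin-Selberg theory. A finite product of functions holomorphic at $s=1$ is holomorphic at $s=1$, so the triple product $L$-function has no pole there. I do not expect any genuine obstacle: the only thing to record is that the rank of a cuspidal representation is unchanged under Galois conjugation, contragredient, and character twist. In the language of Theorem~\ref{thm:ind}, the hypothesis $n_1\neq n_2$ is precisely the degenerate case in which the matrix $A$ contains no entry satisfying \eqref{actual-count} at all.
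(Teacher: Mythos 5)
Your argument is correct and is essentially the paper's own proof: both decompose $\pi_{iK}$ as isobaric sums of Galois conjugates of cuspidal $\theta_i$ on $\GL_{n_i/p}(\A_K)$, note that $n_1/p\neq n_2/p$ rules out any isomorphism $(\sigma^j(\theta_2))^\vee\otimes\chi^{-1}\cong\sigma^k(\theta_1)$, and conclude holomorphy of every Rankin--Selberg factor at $s=1$. Your remark that Galois conjugation, contragredient, and twisting preserve the rank is exactly the (implicit) justification the paper relies on.
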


\begin{proof}
Suppose that both $\pi_1$ and $\pi_2$ are induced from $K$ and $n_1\neq n_2$. Like the arguments in the proof of Theorem \ref{thm:ind}, by \cite[Chapter 3]{ArthurClozel}, we have that
$$\pi_{1K}=\boxplus_{\sigma}\sigma(\theta_1), \quad \text{and}\quad \pi_{2K}=\boxplus_{\sigma'}\sigma'(\theta_2),$$
where $\sigma, \sigma'\in \Gal(K/F)$ and $\theta_i$ are cuspidal automorphic representation of $A_{\GL_{n_i/p}}\backslash\GL_{n_i/p} (\A_K)$. Note that $\frac{n_1}{p}\neq \frac{n_2}{p}$. Therefore 
$$\sigma(\theta_2)^\vee\otimes \chi^{-1} \ncong \sigma'(\theta_1)
$$for any $\sigma, \sigma'\in \Gal(K/F)$, and hence $L(s, \sigma(\theta_1) \times (\sigma'(\theta_2)\otimes \chi))$ does not have a pole at $s=1$ again by the theory of Rankin-Selberg \cite{Cogdell}. This completes the claim.
\end{proof}

The proof of Theorem \ref{thm:ind} above provides in fact an explicit formula of the exact order of the pole of the triple product $L$-functions in a special setting:

\begin{cor}\label{cor:exact-order}
Let $K/F$ be a cyclic extension of prime order $p$ with $\Gal (K/F)=\langle \sigma\rangle$ and let $\chi: A_{\mathbb{G}_m} K^{\times} \backslash \A_{K}^{\times} \to \CC^\times$. For $i=1, 2$, let $\pi_i$ be cuspidal automorphic representations of $A_{\GL_{n_i}}\backslash\GL_{n_i} (\A_F)$ such that $\pi_{iK}=\boxplus_{\sigma}\sigma^j(\theta_i)$ for  cuspidal automorphic representations $\theta_i$ of $A_{\GL_{n_i/p}}\backslash\GL_{n_i/p} (\A_K)$.  If $\ell$ is the number of pairs $(j, k), 0\leq j, k \leq (p-1)$ satisfying
\begin{equation}\label{strong-char-rel}
(\sigma^j(\theta_2))^\vee\otimes\chi^{-1} \cong \sigma^k(\theta_1), 
\end{equation}
then $\ell$ is the order of the pole of $L(s, \pi_1\times\pi_2\times \AI(\chi))$ at $s=1$. \qed
\end{cor}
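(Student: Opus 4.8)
The plan is to promote the counting argument in the proof of Theorem~\ref{thm:ind} from an upper bound to an exact equality; almost all of the work has already been carried out there. Recalling the factorization established in that proof, one has
\begin{equation*}
L(s, \pi_1\times\pi_2\times \AI(\chi)) = \prod_{0\le j,k\le p-1} L\bigl(s, \sigma^j(\theta_1)\times(\sigma^k(\theta_2)\otimes\chi)\bigr),
\end{equation*}
and, since the order of vanishing at a point is additive over products,
\begin{equation*}
\mathrm{ord}_{s=1} L(s, \pi_1\times\pi_2\times \AI(\chi)) = \sum_{j,k}\mathrm{ord}_{s=1} L\bigl(s, \sigma^j(\theta_1)\times(\sigma^k(\theta_2)\otimes\chi)\bigr).
\end{equation*}
The task is thus reduced to evaluating each summand exactly rather than merely bounding it.

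Next I would analyze each factor via Rankin--Selberg theory. Each $\theta_i$, and hence each Galois translate $\sigma^k(\theta_i)$, is cuspidal, so every factor is the Rankin--Selberg $L$-function of the two cuspidal representations $\sigma^j(\theta_1)$ and $\sigma^k(\theta_2)\otimes\chi$ of general linear groups over $K$. By the theory cited in \cite{Cogdell}, such an $L$-function is holomorphic at $s=1$ with the single exception of a simple pole, which occurs precisely when the two representations are contragredient, i.e. when $\sigma^j(\theta_1)\cong(\sigma^k(\theta_2))^\vee\otimes\chi^{-1}$. Up to the harmless relabeling $j\leftrightarrow k$, which preserves the number of ordered pairs in $\{0,\dots,p-1\}^2$, this is exactly condition~\eqref{strong-char-rel}. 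Hence each of the $\ell$ pairs satisfying \eqref{strong-char-rel} contributes a factor with a simple pole at $s=1$, for a total polar contribution of order $\ell$.

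The delicate point---the one that upgrades the bound of Theorem~\ref{thm:ind} to the exact value---is to show that the remaining, non-polar factors cannot cancel any of these simple poles. For this I would invoke the standard nonvanishing of Rankin--Selberg $L$-functions on the line $\Re(s)=1$ (Jacquet--Shalika, Shahidi): each non-polar factor is not only holomorphic at $s=1$ but also nonzero there, and so contributes $0$ to $\mathrm{ord}_{s=1}$. Summing the contributions over all pairs then yields that the order of the pole of $L(s, \pi_1\times\pi_2\times \AI(\chi))$ at $s=1$ equals exactly $\ell$. I expect this nonvanishing to be the only genuine input beyond the bookkeeping already present in Theorem~\ref{thm:ind}: without it, a zero at $s=1$ of one of the non-polar factors could in principle offset one of the simple poles and lower the total order, so that exactness of the count would fail. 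Since the required nonvanishing is well documented in the literature, the remainder of the argument is a direct refinement of the proof of Theorem~\ref{thm:ind}.
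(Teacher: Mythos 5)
Your proposal is correct and follows essentially the same route as the paper, which simply marks the corollary \qed\ as an immediate consequence of the factorization and order-counting already carried out in the proof of Theorem~\ref{thm:ind}. Your explicit appeal to the nonvanishing of Rankin--Selberg $L$-functions at $s=1$ is a worthwhile clarification of a point the paper leaves implicit in writing $\mathrm{ord}_{s=1}L=\sum_{j,k}\mathrm{ord}_{s=1}L_{j,k}$ with each summand equal to $0$ or $1$, but it does not change the argument.
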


We close this paper with a final remark. One might ask if the triple tensor product $L$-functions can ever achieve the maximum order of the pole. When $p=2$, it is known from Ikeda \cite{Ikeda} that the triple product $L$-functions have at most simple pole. Therefore, in some cases, it won't happen.


\section*{Acknowledgements}

The author is grateful to J. R. Getz for his constant support throughout this project and helping with edit of the paper.



\begin{thebibliography}{}


\bibitem[A]{Arthur}
J. Arthur, \textbf{The Endoscopic Classification of Representations: Orthogonal and Symplectic Groups}, Amer. Math. Soc.  Colloquium Publ., \textbf{61}, 2013.

\bibitem[AC]{ArthurClozel}
J. Arthur and L. Clozel, \emph{Simple algebras, base change, and the advanced theory of the trace formula}, \textbf{120}, Ann. Math. Studies, Princeton University Press, Princeton, NJ 1989.






\bibitem[C]{Cogdell}
J. Cogdell, \emph{$L$-functions and converse theorems for $\GL_n$}, In \textit{Automorphic froms and applications}, IAS/Park City Math. Ser. \textbf{12}., 97--177, Amer. Math. Soc. Providence, RI 2007.

\bibitem[CKPSS]{CKPSS}
J. Cogdell, H. Kim, I. Piatetski-Shapiro, and F. Shahidi, \emph{Functoriality for classical groups}, Publ. Math. Inst. Hautes \'Etudes Sci. \textbf{99} (2004), 163--233.


\bibitem[GH]{GH-book}
J. R. Getz and H. Hahn, \textbf{An introduction to automorphic representations with a view towards trace formulae}, submitted for publication. \textit{https://services.math.duke.edu/~hahn/GTM.pdf}


\bibitem[GK]{GK}
J. R. Getz and J. Klassen, \emph{Isolating Rankin-Selberg lifts}, Proc. Amer. Math. Soc., \textbf{143}, No. 8 (2015), 3319--3329.



\bibitem[GJR1]{GJR1}
D. Ginzburg, D. Jiang and S. Rallis, \emph{On nonvanishing of the central value of the Rankin-Selberg $L$-functions}, J. Amer. Math. Soc., \textbf{17} No. 3 (2004), 679--722.

\bibitem[GJR2]{GJR2}
D. Ginzburg, D. Jiang and S. Rallis, \emph{On nonvanishing of the central value of the Rankin-Selberg $L$-functions II}, In \emph{Automorphic represenations, $L$-functions and applications:  progress and prospect}, \textbf{11} Ohio State Univ. Math. Res. Inst. Publ., 157--191, de Gruyter, Berlin, 2005.

\bibitem[GRS]{GRS}
 D. Ginzburg, S. Rallis and D. Soundry, \emph{Generic automorphic forms on $\mathrm{SO}(2n+1)$: Functorial lift to $\GL(2n)$}, \textbf{27} (2012), 143--211.




















 
\bibitem[H1]{Hahn-PAMS} 
H. Hahn,  \emph{On tensor third $L$-functions of automorphic representations of $\mathrm{GL}_n(\mathbb{A}_F)$}, Proc. Amer. Math. Soc., \textbf{144}, No 12 (2016), 5061--5069.

\bibitem[H2]{Hahn-RNUT}
H. Hahn, \emph{On classical groups detected by the triple tensor products and the Littlewood-Richardson semigroup}, Research in Number Theory, \textbf{2}, No 1 (2016), 1--12.

\bibitem[HHLS]{HHLS}
H. Hahn,  J. Huh, E. Lim, and J. Sohn, \emph{From partition identities to a combinatorial approach to explicit Satake inversion}, Ann. Comb., \textbf{22}, No 3 (2018), 543--562.


\bibitem[I]{Ikeda}
T. Ikeda, \emph{On the location of poles of the triple $L$-functions}, Compos. Math., \textbf{83}, No 2 (1992), 187--237.

\bibitem[L1]{Langlands_conj}
R. P. Langlands, \emph{Letter to Andr\'e Weil} (1967), http://publications.ias.edu/rpl/section/21.

\bibitem[L2]{Langlands_beyond}
R. P. Langlands, \emph{Beyond endoscopy}, in \textbf{Contributions to Automorphic Forms, Geometry, and Number Theory: a volume in honor of Joseph Shalika}, Johns Hopkins Univ. Press, 2004.










\end{thebibliography}
\end{document}